\newtheorem{theorem}{Theorem}[section]
\newtheorem{lemma}[theorem]{Lemma}
\newtheorem*{conjecture*}{Conjecture}
\theoremstyle{definition}
\theoremstyle{remark}
\author{Runbo Li}
\address{The High School Affiliated to Renmin University of China, Beijing 100080, People's Republic of China}
\email{carey.lee.0433@gmail.com}
\title[]{On some problems of primes with the floor function}
\subjclass[2020]{11N37, 11L07} 
\keywords{Exponential sum, Distribution of the primes, the floor function}
\begin{document}

\begin{abstract}
Let $\left[x\right]$ be the largest integer not exceeding $x$. For $0<\theta \leq 1$, let $\pi_{\theta}(x)$ denote the number of integers $n$ with $1 \leq n \leq x^{\theta}$ such that $\left[\frac{x}{n}\right]$ is prime and $S_{\mathbb{P}}(x)$ denote the number of primes in the sequence $\left\{\left[\frac{x}{n}\right]\right\}_{n \geqslant 1}$. In this paper, we obtain the asymptotic formula
$$
\pi_{\theta}(x)=\frac{x^{\theta}}{(1-\theta) \log x}+O\left(x^{\theta}(\log x)^{-2}\right)
$$
provide that $\frac{435}{923}<\theta<1$, and prove that 
$$
S_{\mathbb{P}}(x)=x\sum_{p} \frac{1}{p(p+1)}+O_{\varepsilon}\left(x^{435/923+\varepsilon}\right)
$$
for $x \rightarrow \infty$. Thus improve the previous result due to Ma, Wu and the author.
\end{abstract}

\maketitle

\section{Introduction}

The investigations of the summations related to rounding up certain arithmetic functions are quite popular in recent years. It seems that this new term wave of enthusiasm starts from the paper of Bordellès, Dai, Heyman, Pan and Shparlinski \cite{BDHPS2019}, where the following asymptotic formula
\begin{equation}
\sum_{n \leqslant x} f\left(\left[\frac{x}{n} \right]\right)=x \sum_{n=1}^{\infty} \frac{f(n)}{n(n+1)}+O_{f}\left(x^{1 / 2+\varepsilon}\right)
\end{equation}
is given, provided that $f$ satisfies a broad condition involving the growth of the magnitude of it. Here, as usual $\varepsilon$ denotes an arbitrary small positive number. An example of particular interest is the one for $f=\Lambda$. In \cite{LWY2022}, Liu, Wu and Yang proved the following elaborate asymptotic formula
\begin{equation}
\sum_{n \leqslant x} \Lambda\left(\left[\frac{x}{n}\right]\right)=x \sum_{n=1}^{\infty} \frac{\Lambda(n)}{n(n+1)}+O\left(x^{9 / 19+\varepsilon}\right).
\end{equation}
In 2023, Li and Ma improved the exponent $\frac{9}{19}$ to $\frac{17}{36}$ in \cite{LI2023} and Zhang further improved the exponent to $\frac{435}{923}$ in \cite{ZhangWei}.\\

In an other direction, the work of Bordellès, Luca, Moree and Shparlinski \cite{BLMS2018} about the Bernoulli polynomials has led to the consideration of truncated distribution of the primes represented by $\left[x/n\right]$. Let $0<\theta \leq 1$ be a real number and $\pi_{\theta}(x)$ be the number of integers $n$ with $1 \leqslant n \leqslant x^{\theta}$ such that $\left\lfloor\frac{x}{n}\right\rfloor$ is prime. In \cite{MCW2019}, Ma, Chen and Wu proved
\begin{equation}
\pi_{\theta}(x)
=\begin{cases}
\sum_{k=1}^{L} \frac{(-1)^{k-1}(k-1) !}{(1-\theta)^{k}} \frac{x^{\theta}}{(\log x)^{k}}+O\left(\frac{x^{\theta}}{(\log x)^{L+1}}\right), & \frac{23}{47}<\theta<1, \\
x\sum_{p} \frac{1}{p(p+1)}+O\left(x^\frac{26}{53} (\log x)^\frac{119}{53}\right), & \theta=1,\end{cases}
\end{equation}
where $L \geqslant 1$ is any given integer. It is amazing that the leading term in the asymptotic formula of $\pi_{\theta}(x)$ is not continuous at the point $\theta=1$ when $x$ is a given large number. Ma, Chen and Wu further proposed the following conjecture:

\begin{conjecture*} 
For any $0<\theta<1$ and given integer $L \geqslant 1$, we have
\begin{equation}
\pi_{\theta}(x)=\sum_{k=1}^{L} \frac{(-1)^{k-1}(k-1) !}{(1-\theta)^{k}} \frac{x^{\theta}}{(\log x)^{k}}+O\left(\frac{x^{\theta}}{(\log x)^{L+1}}\right) .
\end{equation}
\end{conjecture*}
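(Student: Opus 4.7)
The plan is to reduce $\pi_\theta(x)$ to a weighted sum over primes, extract the main term as an explicit integral and expand it by iterated integration by parts, and control the error by bilinear-sum methods. The $L$-term expansion will fall out automatically once the leading-order analysis is done, so the essential content of the conjecture is a uniform $(\log x)^{-L-1}$ savings on the fractional-part error, valid over the full range $0<\theta<1$.

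The first step is a change of variables, passing from $n$ to $m = [x/n]$. For $n \leq x^\theta$ the value $m$ ranges essentially over $(x^{1-\theta}, x]$, and each integer $m$ in the bulk of this range is hit by exactly $[x/m] - [x/(m+1)]$ values of $n$. Restricting to prime $m$ gives
\[
\pi_\theta(x) = \sum_{x^{1-\theta} < p \leq x} \left(\left[\frac{x}{p}\right] - \left[\frac{x}{p+1}\right]\right) + O(1),
\]
where the $O(1)$ collects the at most one boundary prime. Writing $[y] = y - \{y\}$ splits this into a smooth main term $\sum_{p} x/(p(p+1))$ and a fractional-part error $-\sum_p (\{x/p\} - \{x/(p+1)\})$.

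For the main term, partial summation against $\pi(t)$, using the Vinogradov--Korobov error in the prime number theorem, reduces the problem to
\[
x \int_{x^{1-\theta}}^{x} \frac{dt}{t^2 \log t},
\]
up to an error of size $x^\theta \exp(-c(\log x)^{3/5})$, which is negligible against any power of $\log x$. Iterated integration by parts with $u = (\log t)^{-k}$ and $dv = t^{-2}\,dt$ then produces exactly the claimed expansion $\sum_{k=1}^{L} \frac{(-1)^{k-1}(k-1)!}{(1-\theta)^{k}} \cdot \frac{x^{\theta}}{(\log x)^{k}}$ for any order $L$, with remainder of the required shape $x^\theta (\log x)^{-L-1}$. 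So once we control the fractional-part error, the full conjecture for all $L \geq 1$ follows by a purely analytic manipulation.

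The fractional-part error is where the genuine difficulty concentrates, and is the source of the restriction on $\theta$ in known results. Approximating $\{y\}$ by its Vaaler truncation reduces it to estimating $\sum_{p} e(hx/p)$ for $1 \leq h \leq H$, with $p$ in dyadic subranges of $(x^{1-\theta}, x]$. Decomposing the prime indicator via Heath-Brown's or Vaughan's identity turns these prime sums into Type I sums $\sum_{d \leq D} a_d \sum_k e(hx/(dk))$ and Type II sums $\sum_{d \sim D} \sum_{k \sim K} a_d b_k e(hx/(dk))$, which are then estimated by the exponent-pair and Bombieri--Iwaniec machinery as refined in \cite{ZhangWei}. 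This route yields the required savings when $\theta > 435/923$, and the integration-by-parts argument above then promotes the $L=1$ case to arbitrary $L$ in that range. The main obstacle to the full conjecture is the small-$\theta$ regime: as $\theta \to 0^+$, the outer summation variable $n \leq x^\theta$ is confined to such a short range that none of the currently available Type II estimates beats the trivial bound, and at the same time the inner variable $x/p$ lives in the very thin window $[1, x^\theta]$ where $e(hx/p)$ has almost no cancellation. Closing this gap will presumably require a fundamentally new input---for instance, a Bombieri--Vinogradov type distribution result for primes $p$ in residue classes modulo the increments of $[x/p]$, or a sieve-theoretic refinement that exploits the multiplicative structure hidden inside $[x/n]$---and this is the step I expect to be the main difficulty.
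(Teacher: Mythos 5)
The statement you are asked to prove is an open conjecture (due to Ma, Chen and Wu), and the paper itself does not prove it: the paper's Theorem~1.1 only establishes the asymptotic expansion in the restricted range $\frac{435}{923}<\theta<1$. Your proposal, read as a proof of the full conjecture for all $0<\theta<1$, therefore has a genuine gap --- one that you yourself identify candidly in the final paragraph. Everything up to and including the exponential-sum treatment is a sketch of essentially the same argument the paper uses for Theorem~1.1: switch from $n$ to $d=[x/n]$ via the divisor-switching identity (the paper's Lemma~\ref{4}), split $[y]=y-\{y\}$ to separate a smooth main term from a fractional-part error, expand the main term $x\sum_{p\geqslant x^{1-\theta}}\frac{1}{p(p+1)}$ into the $L$-term series (the paper quotes this as Lemma~\ref{3} rather than rederiving it by partial summation against $\pi(t)$, but the content is identical), and control $\sum_p \rho(x/(p+\delta))$ by Vaughan/Heath-Brown decomposition plus the exponent-pair and Bombieri--Iwaniec refinements of Liu--Wu--Yang and Zhang. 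That machinery is exactly what produces the threshold $\theta>\frac{435}{923}$, so your argument establishes the known partial result, not the conjecture. The small-$\theta$ obstruction you describe is real and is precisely why the statement remains conjectural; no amount of reorganizing the $L$-term expansion circumvents it, since the expansion is the easy analytic part and the barrier sits entirely in the bound $R_\delta(x)\ll x^\theta(\log x)^{-L-1}$ for $\theta$ near $\frac{435}{923}$ and below.

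Two smaller points. First, your boundary term in the switching step is not $O(1)$: the single boundary value $m_0\approx x^{1-\theta}$ can be hit by as many as $[x/m_0]-[x/(m_0+1)]\asymp x^{2\theta-1}$ values of $n$, so the correct error is $O(x^{2\theta-1})$, as in the paper's display for $S_2$; this is still $\ll x^{\theta-\varepsilon}$ for $\theta<1$ and hence harmless, but the stated $O(1)$ is wrong as written for $\theta>\frac12$. Second, you apply the switching over the entire range $n\leqslant x^\theta$, whereas the paper first discards $n\leqslant x^{435/923}$ trivially (contributing $O(x^{435/923})$) so that the dual variable $d$ stays below $x^{488/923}<x^{2/3}$, the range in which Lemmas~\ref{sigma_delta} and~\ref{sigma2} apply; if you keep the full range you must still truncate the $d$-sum before invoking any exponential-sum estimate, since no nontrivial bound is available for $d$ beyond $x^{488/923}$.
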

In 2022, Zhou and Feng \cite{ZF} proved that for any $\frac{9}{19}<\theta<1$ and $L \geqslant 1$, the conjecture is true. In 2023, Li \cite{LRB} showed that even $\frac{17}{36}<\theta<1$ is admissible.\\

In this paper, we shall give an improvement of the result obtained by Zhou, Feng and Li. We obtained this result by combine the methods in \cite{ZhangWei} and \cite{ZF}. Now, let's state our main result as the following theorem.

\begin{theorem}\label{t1} Let $\theta$ be a number with $\frac{435}{923}<\theta<1$ and $L \geqslant 1$ be a given integer. Then

\begin{equation}
\pi_{\theta}(x)=\sum_{k=1}^{L} \frac{(-1)^{k-1}(k-1) !}{(1-\theta)^{k}} \frac{x^{\theta}}{(\log x)^{k}}+O\left(\frac{x^{\theta}}{(\log x)^{L+1}}\right),
\end{equation}

\noindent where the implied constant depends on $\theta, L$ and the real number $\varepsilon>0$ which is contained in Lemma~\ref{sigma_delta}. 
\end{theorem}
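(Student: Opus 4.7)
The plan is to reduce the problem to a uniform asymptotic for the truncated Chebyshev-like sum $A_\theta(t)\colonequals\sum_{n\le t}\Lambda([x/n])$, and then recover the multi-term expansion for $\pi_\theta(x)$ by partial summation. Since \cite{ZhangWei} obtained the error $O(x^{435/923+\varepsilon})$ for the untruncated sum (the case $t=x$), the task is to transplant that estimate to the truncated range $t\le x^\theta$ along the lines of \cite{ZF}, and then to convert from $\Lambda$-weights to prime-indicator weights.

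First, I would remove the weight $1/\log[x/n]$ and the higher prime powers. Writing $\mathbf{1}_{\mathbb{P}}(m)=\Lambda(m)/\log m-\Lambda_{2}(m)/\log m$ with $\Lambda_{2}$ supported on true prime powers $p^{k}$, $k\ge 2$, the $\Lambda_{2}$-contribution is bounded trivially by $\sum_{p^{k}\le x,\,k\ge 2}(\lfloor x/p^{k}\rfloor-\lfloor x/(p^{k}+1)\rfloor)=O(x^{1/2})$. This reduces the theorem to the estimate
\begin{equation*}
\sum_{n\le x^{\theta}}\frac{\Lambda([x/n])}{\log[x/n]}=\sum_{k=1}^{L}\frac{(-1)^{k-1}(k-1)!}{(1-\theta)^{k}}\cdot\frac{x^{\theta}}{(\log x)^{k}}+O\!\left(\frac{x^{\theta}}{(\log x)^{L+1}}\right).
\end{equation*}

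Next, I would establish the uniform estimate
\begin{equation*}
A_{\theta}(t)=t+O\!\left(x^{435/923+\varepsilon}\right)\qquad\text{for }x^{435/923}\le t\le x^{\theta},
\end{equation*}
which is the technical heart. After writing $A_{\theta}(t)=\sum_{m\ge x/t}\Lambda(m)(\lfloor x/m\rfloor-\lfloor x/(m+1)\rfloor)$ up to a boundary term near $m=x/t$, I would split off the smooth part $x\sum_{m\ge x/t}\Lambda(m)/(m(m+1))$, which equals $t(1+O(\exp(-c\sqrt{\log x})))$ by the prime number theorem with classical error, and treat the fractional-part piece $\sum_{m}\Lambda(m)\psi(x/m)$ via Vaaler's approximation followed by a Heath-Brown--Vaughan identity. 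This expresses the remainder as Type~I and Type~II exponential sums over truncated dyadic boxes, which are then bounded using the parameter choices of \cite{ZhangWei} together with Lemma~\ref{sigma_delta}, with the outer variable capped at $t$ rather than $x$.

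Finally, Abel summation yields
\begin{equation*}
\sum_{n\le x^{\theta}}\frac{\Lambda([x/n])}{\log[x/n]}=\frac{A_{\theta}(x^{\theta})}{(1-\theta)\log x}+\int_{X_{0}}^{x^{\theta}}\frac{A_{\theta}(t)}{t(\log(x/t))^{2}}\,dt+O\!\left(\frac{x^{435/923+\varepsilon}}{\log x}\right),
\end{equation*}
taking $X_{0}=x^{435/923}$ and estimating the integral over $[2,X_{0}]$ trivially. Inserting $A_{\theta}(t)=t+O(x^{435/923+\varepsilon})$ and substituting $w=\log(x/t)$ reduces matters to the integrals $J_{k}\colonequals\int_{0}^{\theta\log x}e^{w}/(\log x-w)^{k}\,dw$. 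Integration by parts yields the recursion $J_{k}=x^{\theta}/((1-\theta)^{k}(\log x)^{k})-kJ_{k+1}+O((\log x)^{-k})$, and iterating $L$ times produces the claimed expansion. The principal obstacle is the second step: Zhang's proof for the full sum relies on a finely tuned decomposition, and one must verify that every Type~I and Type~II estimate continues to give the $435/923$ exponent when the outer summation is restricted to $n\le t\le x^{\theta}$. The hypothesis $\theta>\tfrac{435}{923}$ enters precisely so that the main term $t$ dominates the error $x^{435/923+\varepsilon}$ at $t=x^{\theta}$, which in turn makes the Abel summation in the last step produce a genuine asymptotic rather than just an upper bound.
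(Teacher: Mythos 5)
Your route is genuinely different from the paper's. The paper never passes through the von Mangoldt function for Theorem~\ref{t1}: it splits $\pi_\theta(x)$ at $n=x^{435/923}$, bounds the initial segment trivially by $x^{435/923}$, inverts the remaining sum via Lemma~\ref{4} into $\sum_{x^{1-\theta}\le p\le x^{488/923}}\bigl(x/(p(p+1))-\rho(x/p)+\rho(x/(p+1))\bigr)$, imports the full $L$-term expansion of $x\sum_{p\ge x^{1-\theta}}1/(p(p+1))$ wholesale from Lemma~\ref{3} (a quoted result of Zhou--Feng), and kills the $\rho$-sums by dyadic decomposition and Lemma~\ref{sigma2}. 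You instead go through $A_\theta(t)=\sum_{n\le t}\Lambda([x/n])$, a uniform-in-$t$ asymptotic, and Abel summation with the $J_k$ recursion; in effect you re-prove Lemma~\ref{3} along the way. Both arguments rest on the same exponential-sum input, and the ``principal obstacle'' you flag---re-verifying Zhang's Type~I/II estimates on truncated ranges---is already discharged by Lemma~\ref{sigma_delta}, which is stated for $\Sigma_\delta(x,D,t)$ with $D<t\le 2D$; a dyadic decomposition of $x/t\le d\le x^{488/923}$ reduces your step two to exactly those sums, the range $d>x^{488/923}$ (i.e.\ $n\le x^{435/923}$) being handled trivially as in the paper. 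So the skeleton is viable, but the paper's version is shorter because the expansion is a black box and no conversion from $\Lambda$ to $\mathbb{1}_{\mathbb{P}}$ is needed.

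That said, several of your intermediate claims are false as written, though each is repairable. (i) The prime-power correction is not $O(x^{1/2})$: the unrestricted sum $\sum_{p^k\le x,\,k\ge 2}([x/p^k]-[x/(p^k+1)])$ is $\asymp x$ (the single term $p^k=4$ already contributes $\sim x/20$). You must use that $n\le x^\theta$ forces $[x/n]\ge x^{1-\theta}$, so only $p^k\ge x^{1-\theta}$ occur; the restricted sum is $O(x^{(3\theta-1)/2}+x^{1/2})$, which exceeds $x^{1/2}$ for $\theta>2/3$ but is still $\ll x^\theta(\log x)^{-L-1}$. (ii) The uniform estimate $A_\theta(t)=t+O(x^{435/923+\varepsilon})$ cannot hold: the smooth part carries the prime-number-theorem error $O(te^{-c\sqrt{\log x}})$ (which you note and then drop), and the dyadic blocks near $D=x^{1-\theta}$ contribute $O(x^{(2+\theta)/6+\varepsilon})$, which exceeds $x^{435/923}$ once $\theta>764/923$; both terms are still $\ll x^\theta(\log x)^{-L-1}$, so the theorem survives, but the claim must be restated before it is inserted into the Abel summation. (iii) The integral in your Abel summation should enter with a minus sign, since $t\mapsto 1/\log(x/t)$ is increasing; with your plus sign the $J_k$ recursion would produce $+x^\theta/((1-\theta)^2(\log x)^2)$ as the $k=2$ term instead of the required $-x^\theta/((1-\theta)^2(\log x)^2)$.
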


By the same method in \cite{ZF}, we also proved this theorem.

\begin{theorem}\label{2} Let $\theta$ be a number with $\frac{435}{923}<\theta<1$. For any any integer $A \geqslant 1$, We have

\begin{equation}
\Lambda_{\theta}(x):=\sum_{n \leq x^{\theta}} \Lambda\left(\left[\frac{x}{n}\right]\right)=x^{\theta}+O\left(x^{\theta}(\log x)^{-A}\right),
\end{equation}

\noindent where the implied constant depends on $\theta, L$ and the real number $\varepsilon>0$ which is contained in Lemma~\ref{sigma_delta}.
\end{theorem}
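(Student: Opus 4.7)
\medskip
\noindent\textbf{Proof plan.} The strategy is to deduce Theorem \ref{2} from Theorem \ref{t1} by a standard Abel-summation argument along the lines of \cite{ZF}. First, I would isolate the contribution of proper prime powers: writing
\begin{equation*}
\Lambda_\theta(x) = \sum_{\substack{n \leq x^\theta \\ [x/n] \text{ prime}}} \log[x/n] + \sum_{\substack{n \leq x^\theta \\ [x/n] = p^k,\, k \geq 2}} \log p,
\end{equation*}
one notes that $n \leq x^\theta$ forces $[x/n] \geq x^{1-\theta}$, hence in the second sum the prime $p$ with $p^k = [x/n]$ satisfies $p \geq x^{(1-\theta)/k}$; combined with $\#\{n : [x/n] = p^k\} \ll x/p^{2k}$, the second sum is $O(x^{(3\theta-1)/2})$, hence $\ll x^\theta(\log x)^{-A}$ since $\theta < 1$. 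Moreover, since $\log[x/n] = \log(x/n) + O(n/x)$, one may replace $\log[x/n]$ by $\log(x/n) = \log x - \log n$ at a cost of $O(x^{2\theta - 1})$, which is also absorbable.

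The remaining sum is handled by Abel summation with the counting function $\tilde\pi(x, t) := \#\{n \leq t : [x/n] \text{ prime}\}$, which gives
\begin{equation*}
\sum_{\substack{n \leq x^\theta \\ [x/n]\text{ prime}}}\log(x/n) = (1-\theta)(\log x)\,\pi_\theta(x) + \int_1^{x^\theta}\tilde\pi(x, t)\,\frac{dt}{t}.
\end{equation*}
I would split the integral at $t_0 = x^{435/923 + \eta}$ for small $\eta > 0$. On $t \leq t_0$ the trivial bound $\tilde\pi(x, t) \leq t$ yields contribution $O(t_0)$, which is $O(x^\theta(\log x)^{-A})$ when $\eta$ is small enough; on $t > t_0$, substituting $t = x^s$ and invoking Theorem \ref{t1} with $s$ in place of $\theta$ expands $\tilde\pi(x, x^s)$ into its $L$-term asymptotic plus a remainder $O(x^s(\log x)^{-L-1})$. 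The resulting integrals $\int_{s_0}^\theta x^s/(1-s)^k\, ds$ are then evaluated by iterated integration by parts, each step producing a boundary contribution at $s = \theta$ and a new integral one order smaller in $1/\log x$.

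The payoff is a telescoping cancellation at every order $(\log x)^{-m}$ for $1 \leq m \leq L-1$: the $(m+1)$-st term of the expansion of $(1-\theta)(\log x)\pi_\theta(x)$ contributes coefficient $(-1)^m\, m!/(1-\theta)^m$ at $x^\theta(\log x)^{-m}$, while the combined contribution from the integration-by-parts expansions of the terms $k = 1, \ldots, m$ of the integral is $(-1)^{m-1}\, m!/(1-\theta)^m$, an exact sign flip. Only the $m=0$ main term $x^\theta$ survives, with residual error $O(x^\theta(\log x)^{-L+O(1)})$; choosing $L$ sufficiently large in terms of $A$ completes the proof. The main obstacle I anticipate is verifying that the asymptotic of Theorem \ref{t1} holds \emph{uniformly} for $s$ in the compact interval $[435/923 + \eta,\, \theta]$ bounded away from the critical exponent; this requires checking that the implicit constants in Lemma \ref{sigma_delta} and in the exponential-sum machinery depend continuously on $s$, which should follow from a careful reading of the proof of Theorem \ref{t1}. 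The remaining calculations are routine bookkeeping in the style of \cite{ZF}.
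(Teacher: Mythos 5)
Your route is genuinely different from the paper's. The paper does not deduce Theorem \ref{2} from Theorem \ref{t1} by partial summation; it reruns the proof of Theorem \ref{t1} directly with the weight $\Lambda(d)$ in place of $\mathbb{1}_{\mathbb{P}}(d)$: split at $n=x^{435/923}$, apply Lemma \ref{4} to convert the tail into $\sum_{x^{1-\theta}\leq d\leq x^{488/923}}\Lambda(d)\bigl(x/d-x/(d+1)-\rho(x/d)+\rho(x/(d+1))\bigr)$, extract the main term $x^{\theta}$ from Lemma \ref{5}, and bound the $\rho$-sums by dyadic decomposition and Lemma \ref{sigma_delta} (which is already stated for $\Lambda$). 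That route never has to separate prime powers and never needs uniformity of Theorem \ref{t1} in the exponent, which are precisely the two places where your argument is under strain. On the positive side, I checked your telescoping computation: the coefficient of $x^{\theta}(\log x)^{-m}$ coming from $(1-\theta)(\log x)\pi_{\theta}(x)$ is indeed $(-1)^{m}m!/(1-\theta)^{m}$ and the terms $k=1,\dots,m$ of the integral each contribute $(-1)^{m-1}(m-1)!/(1-\theta)^{m}$, summing to the exact negative; so the analytic core of your plan is sound, and the uniformity you flag does hold on a compact subinterval $[435/923+\eta,\theta]$ since one only needs $\varepsilon<\eta$ in the bound $x^{435/923+\varepsilon}\ll x^{s}(\log x)^{-L-1}$.

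The genuine gap is your treatment of the proper prime powers. The bound $\#\{n:[x/n]=p^{k}\}\ll x/p^{2k}$ is false: the correct bound is $[x/p^k]-[x/(p^k+1)]\leq x/p^{2k}+1$, and the extra $1$ cannot be dropped, because for $p^{k}>x^{1/2}$ the main term $x/p^{2k}$ is less than $1$ while the value $p^k$ may still be attained. Summing these $1$'s over proper prime powers $p^{k}\in[x^{1-\theta},x]$ costs $O(x^{1/2}/\log x)$, and the only competing trivial bound is the total number of $n$, namely $x^{\theta}$. Since $435/923<1/2$, for $\theta\in(435/923,1/2]$ neither $x^{1/2}$ nor $x^{\theta}$ is $O(x^{\theta}(\log x)^{-A})$, so your claimed estimate $O(x^{(3\theta-1)/2})$ for the prime-power sum is not established on a nonempty part of the stated range, and proving anything close to $x^{(3\theta-1)/2}$ there would itself require nontrivial equidistribution of $x/p^{k}$ modulo $1$. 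The fix is cheap and mirrors the paper: first discard $n\leq x^{435/923}$ at a trivial cost of $O(x^{435/923}\log x)$, so that the remaining values $[x/n]$ lie below $x^{488/923}$, where there are only $O(x^{244/923})$ proper prime powers, each attained $O(x^{2\theta-1}+1)$ times; the prime-power contribution is then $O\bigl((x^{(3\theta-1)/2}+x^{244/923+53/923})\log x\bigr)$, which is acceptable. With that repair (and the uniformity verification written out), your argument goes through, but as written the step fails for $\theta\leq 1/2$.
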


Let $\mathbb{P}$ be the set of all primes and let $\mathbb{P}_{\text {w}}$ be the set of all prime powers. Denote by $\mathbb{1}_{\mathbb{P}}$ and $\mathbb{1}_{\mathbb{P}_{\text {w}}}$ their characteristic functions, respectively. In 2021, Heyman \cite{Heyman} proposed to study the number of primes or prime powers in the sequence $\left\{\left[\frac{x}{n}\right]\right\}_{n \geqslant 1}$ :

$$
S_{\mathbb{P}}(x):=\sum_{n \leqslant x} \mathbb{1}_{\mathbb{P}}\left(\left[\frac{x}{n}\right]\right), \quad S_{\mathbb{P}_{\text {w}}}(x):=\sum_{n \leqslant x} \mathbb{1}_{\mathbb{P}_{\text {w}}}\left(\left[\frac{x}{n}\right]\right) .
$$

Theorems 5 and 7 of \cite{Heyman} can be stated as follows:
\begin{align}
S_{\mathbb{P}}(x) & =C_{\mathbb{1}_{\mathbb{P}}} x+O\left(x^{1 / 2}\right), \\
S_{\mathbb{P}_{\text {w}}}(x) & =C_{\mathbb{1}_{\mathbb{P}_{\text {w}}}} x+O\left(x^{1 / 2}\right),
\end{align}

where $C_{\mathbb{1}_{\mathbb{P}}}:=\sum_{p} \frac{1}{p(p+1)}$ and $C_{\mathbb{1}_{\mathbb{P}_{\text {w}}}}:=\sum_{p, \nu \geqslant 1} \frac{1}{p^{\nu}\left(p^{\nu}+1\right)}$. \\

In 2021, Ma and Wu \cite{MAWU} proposed better results by breaking the $\frac{1}{2}$-barrier in the error term. They proved that for any $\varepsilon >0$,
\begin{align}
S_{\mathbb{P}}(x) & =C_{\mathbb{1}_{\mathbb{P}}} x+O_{\varepsilon}\left(x^{9 / 19+\varepsilon}\right), \\
S_{\mathbb{P}_{\text {w}}}(x) & =C_{\mathbb{1}_{\mathbb{P}_{\text {w}}}} x+O_{\varepsilon}\left(x^{9 / 19+\varepsilon}\right),
\end{align}
as $x \rightarrow \infty$, where the implied constants depend on $\varepsilon$.

In this paper, we shall give an improvement of the result obtained by Ma and Wu. Now let's state our result as the following theorem.

\begin{theorem}\label{t3}
For any $\varepsilon >0$,
\begin{align}
S_{\mathbb{P}}(x) & =C_{\mathbb{1}_{\mathbb{P}}} x+O_{\varepsilon}\left(x^{435 /923+\varepsilon}\right), \\
S_{\mathbb{P}_{\text{w}}}(x) & =C_{\mathbb{1}_{\mathbb{P}_{\text {w}}}} x+O_{\varepsilon}\left(x^{435 / 923+\varepsilon}\right),
\end{align}
as $x \rightarrow \infty$, where the implied constants depend on $\varepsilon$.
\end{theorem}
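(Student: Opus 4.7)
My plan is to follow the two-stage approach of Ma and Wu~\cite{MAWU}, simply substituting Theorem~\ref{t1} for the weaker $\pi_\theta$ result they used. Setting $\theta_0 := 435/923$ and
\[
F(m) := \left[\tfrac{x}{m}\right] - \left[\tfrac{x}{m+1}\right],
\]
the bijection $\#\{n \leq x : [x/n] = m\} = F(m)$ gives $S_{\mathbb{P}}(x) = \sum_{p} F(p)$, which I would split at $p = x^{1-\theta_0}$. The large-$p$ piece $\sum_{p > x^{1-\theta_0}} F(p)$ counts precisely those integers $n \lesssim x^{\theta_0}$ for which $[x/n]$ is prime, so it equals $\pi_{\theta_0}(x) + O(1)$; by Theorem~\ref{t1} this is $\ll x^{\theta_0}/\log x$, well inside the target error.

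For the small-$p$ piece $\Sigma_1 := \sum_{p \leq x^{1-\theta_0}} F(p)$, I would use the identity
\[
F(p) = \frac{x}{p(p+1)} + \psi\!\left(\tfrac{x}{p+1}\right) - \psi\!\left(\tfrac{x}{p}\right), \qquad \psi(t) := \{t\} - \tfrac{1}{2},
\]
to split off the smooth main term from a fractional-part remainder. Extending the series $\sum 1/(p(p+1))$ to all primes yields the constant $C_{\mathbb{1}_{\mathbb{P}}} x$ with tail $\ll x^{\theta_0}$. The real work is then to show
\[
\sum_{p \leq x^{1-\theta_0}} \psi\!\left(\tfrac{x}{p}\right) \;\ll_{\varepsilon}\; x^{\theta_0 + \varepsilon},
\]
and the analogous bound with $x/p$ replaced by $x/(p+1)$. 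Via Vaaler's truncated Fourier expansion of $\psi$, this reduces to bounding linear exponential sums over primes of the form $\sum_{p \leq y} e(hx/p)$ and $\sum_{p \leq y} e(hx/(p+1))$ for $1 \leq h \leq H$ (with $H$ a small power of $x$), which I would then dispatch by Vaughan's identity together with the Type~I/Type~II estimates of Lemma~\ref{sigma_delta}---the very same ingredients that yield the $435/923$ exponent in Theorems~\ref{t1} and~\ref{2}.

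The extension to $S_{\mathbb{P}_{\text{w}}}(x)$ requires only summing over prime powers $p^{\nu}$ in place of primes. The contribution from $\nu \geq 2$ is $O(x^{1/2+\varepsilon})$ (since $p^{\nu} \leq x$ forces $p \leq x^{1/\nu}$), which is absorbed harmlessly into the error; the $\nu = 1$ contribution is handled exactly as before, with the constant upgrading from $C_{\mathbb{1}_{\mathbb{P}}}$ to $C_{\mathbb{1}_{\mathbb{P}_{\text{w}}}} = \sum_{p,\,\nu \geq 1} 1/(p^{\nu}(p^{\nu}+1))$.

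The single hard step is the fractional-part bound; everything else is partial summation, tail estimates of absolutely convergent series, or a direct invocation of Theorem~\ref{t1}. The entire improvement from Ma and Wu's $9/19$ down to $435/923$ is inherited from the stronger exponential sum estimates packaged in Lemma~\ref{sigma_delta}.
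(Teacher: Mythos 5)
Your decomposition is essentially the paper's: the split at $p = x^{1-435/923}$ matches the paper's split of the $n$-range at $x^{435/923}$, the identity $F(d)=\frac{x}{d(d+1)}+\rho\left(\frac{x}{d+1}\right)-\rho\left(\frac{x}{d}\right)$ is the same main-term extraction, and the bound you isolate as ``the single hard step'' is exactly $R_{\delta}^{f}(x)=\sum_{x^{435/923}<p\leqslant x^{488/923}}\rho\left(\frac{x}{p+\delta}\right)\ll_{\varepsilon}x^{435/923+\varepsilon}$, which Lemma~\ref{sigma2} delivers after a dyadic decomposition. (The Vaaler/Vaughan machinery you propose to deploy is what already underlies Lemma~\ref{sigma_delta} in the cited sources, so you would be re-deriving it rather than invoking it; the substance agrees.) One cosmetic remark: Theorem~\ref{t1} is stated only for $\theta>435/923$, so you cannot quote it at $\theta_0=435/923$ exactly; but the trivial bound $\pi_{\theta_0}(x)\leqslant x^{\theta_0}$ is all you need for the large-$p$ piece, which is how the paper treats $S_{f1}$.

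The genuine problem is your treatment of $S_{\mathbb{P}_{\text{w}}}$. You claim the contribution of prime powers with $\nu\geqslant 2$ is $O(x^{1/2+\varepsilon})$ and that this is ``absorbed harmlessly into the error.'' It is not: the target error is $O_{\varepsilon}(x^{435/923+\varepsilon})$ and $435/923<1/2$, so an $x^{1/2+\varepsilon}$ term would swamp the result --- beating the exponent $1/2$ is the entire point of the theorem. The count of prime powers $p^{\nu}\leqslant x$ with $\nu\geqslant 2$ really is of order $x^{1/2}/\log x$, and each contributes an $O(1)$ fractional-part error, so applying the $\psi$-decomposition to all of them genuinely produces an $x^{1/2}$-sized term, not something negligible. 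The fix, which is what the paper does, is to apply the $\psi$-decomposition only to moduli $d=p^{\nu}\leqslant x^{488/923}$ (those arising from $n>x^{435/923}$); in that range the number of higher prime powers is $O(x^{244/923})$, which is acceptable, while prime powers exceeding $x^{488/923}$ sit inside the trivially bounded piece. As written, your argument for the second asymptotic does not close.
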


\section{Some Lemmas}

From now on, let $x$ be a large positive number. Let $\varepsilon>0$ be an arbitrary small positive number which may not be the same throughout our paper. Let $\mathbb{N}$ and $\mathbb{P}$ be the set of positive integers and prime numbers, respectively. The notation $p$ will always denote a prime number. Let $\pi(x)$ be the number of primes up to $x$ and

$$
\Lambda(n)=\left\{\begin{array}{ll}\log p & \text { if}\ n=p^{\alpha}, \\
0 & \text { otherwise}. \end{array}
\right.
$$

\noindent be the von Mangoldt function. For any real number $t$, let
$$
\rho(t)=t-\left[t\right]-1 / 2.
$$

\noindent For $0<D \leqslant x, D<t \leqslant 2 D$ and $\delta \notin-\mathbb{N}$, let
$$
\Sigma_{\delta}(x, D, t)=\sum_{D<d \leqslant t} \Lambda(d) \rho\left(\frac{x}{d+\delta}\right).
$$

We need some auxiliary results before the proof of Theorems~\ref{t1} and ~\ref{t3}.\\

\begin{lemma}\label{sigma_delta}
Let $\delta \notin-\mathbb{N}$ be a fixed constant. For $D<t \leqslant 2 D$ and $D<x^{2 / 3}$, we have

$$
\Sigma_{\delta}(x, D, t) \ll_{\varepsilon}\left\{\begin{array}{ll}
x^{1 / 2+\varepsilon} D^{-1 / 6} & \text { if }\ D<x^{3 / 7}, \\
x^{1 / 3+\varepsilon} D^{2 / 9} & \text { if }\ x^{3 / 7} \leqslant D<x^{6 / 13}, \\
x^{1 / 6+\varepsilon} D^{7 / 12} & \text { if }\ x^{6 / 13} \leqslant D<x^{482/923}, \\
x^{435/923+\varepsilon} & \text { if }\ x^{482/923} \leqslant D<x^{488/923}.
\end{array} \right.
$$
\end{lemma}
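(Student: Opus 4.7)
The plan is to replace $\rho(x/(d+\delta))$ by a trigonometric polynomial via Vaaler's approximation, apply Heath--Brown's identity to $\Lambda(d)$ in order to reduce to bilinear (Type I and Type II) exponential sums of the form $\sum \sum a_m b_n\, e(hx/(mn+\delta))$, and finally invoke the sharpest available exponential-sum estimates in each of the four displayed ranges of $D$.

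First I would fix a truncation parameter $H$ (to be optimized) and use Vaaler's Fourier expansion together with partial summation to reduce the lemma to an estimate of the form
$$\Sigma_\delta(x,D,t) \ll \frac{D \log x}{H} + \log x \max_{D < t' \le 2D}\sum_{1 \le h \le H} \frac{1}{h} \bigl|S_h(D, t')\bigr|,$$
where
$$S_h(D, t') = \sum_{D < d \le t'} \Lambda(d)\, e\!\left(\frac{hx}{d+\delta}\right).$$
To $S_h$ I would then apply Heath--Brown's identity at level $k = 3$, splitting the $d$-sum into $O(\log^{6}x)$ pieces that, after a dyadic dissection, take the shape of a Type I sum $\sum_{m \sim M} a_m \sum_{n \sim N} e(hx/(mn+\delta))$ (with $N$ significantly larger than $M$) or a Type II sum $\sum_{m\sim M}\sum_{n\sim N} a_m b_n\, e(hx/(mn+\delta))$ with $M, N$ in an intermediate range and $MN \asymp D$.

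For the Type I sums I would treat the inner $n$-sum as a one-dimensional exponential sum with smooth phase $hx/(mn+\delta)$ and apply a suitable exponent pair $(\kappa,\lambda)$: the first range $D < x^{3/7}$ corresponds to a first-derivative/Kusmin--Landau estimate, while the second and third ranges correspond to applying one and two Weyl--van der Corput $B$-processes respectively. For the Type II sums the inner sum has no long variable, so I would apply Cauchy--Schwarz together with a Weyl $A$-shift in either $m$ or $n$, reducing matters to counting near-collisions $hx/(mn+\delta) - hx/(m'n+\delta)$ of size $\le 1/N$ via a second- or higher-derivative argument, and then balance $M,N$ against $H$ to produce the first three claimed bounds.

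The main obstacle is the fourth range $x^{482/923} \le D < x^{488/923}$, where the straightforward exponent-pair approach no longer suffices and the bound $x^{435/923+\varepsilon}$ must match the one obtained by Zhang in \cite{ZhangWei}. Here I would transplant Zhang's treatment of the Type II sum, which relies on a sharper three-dimensional exponential sum estimate (in the spirit of Robert--Sargos spacing arguments) in place of the two-dimensional Fouvry--Iwaniec-style bound; this is the heart of the matter, since a loss of any constant in the exponent at this stage would destroy the ultimate $435/923$ exponent in Theorems~\ref{t1} and \ref{t3}. Once this delicate range is settled, the optimal $H$ is chosen so that the Vaaler diagonal contribution $D/H$ balances the bilinear contributions in each range, and the four stated bounds follow after routine optimization.
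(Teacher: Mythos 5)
Your plan is essentially the route the paper takes: the paper's own proof consists of quoting \cite[Proposition 4.1]{LWY2022} with the exponent pair $(1/2,1/2)$ and Zhang's estimate \cite[(3.6)]{ZhangWei} for the range $x^{482/923}\leqslant D<x^{488/923}$, and then comparing the resulting terms across the four ranges of $D$; your Vaaler--Heath-Brown--exponent-pair outline, with the critical fourth range deferred to Zhang's sharper Type~II treatment, is precisely the machinery inside those two cited results. The only additional point is that the lemma concerns general $t\in(D,2D]$ rather than $t=2D$, which the paper handles by noting the cited proofs go through unchanged --- your formulation with a maximum over $t'$ already covers this.
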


\begin{proof} From {\cite[Proposition 4.1]{LWY2022}} with $(\kappa, \lambda)=\left(\kappa^{\prime}, \lambda^{\prime}\right)=(1 / 2,1 / 2)$ and {\cite[(3.6)]{ZhangWei}} with $(\kappa, \lambda)=\left(\kappa^{\prime}, \lambda^{\prime}\right)=(1 / 2,1 / 2)$, $\varrho=6 / 923$ and $\varpi=20.5 / 923$, we have
$$
\Sigma_{\delta}(x, D, 2 D) \ll_{\varepsilon} x^{\varepsilon}\left(x^{1 / 6} D^{7 / 12}+D^{5 / 6}+x^{1 / 3} D^{2 / 9}+x^{1 / 2} D^{-1 / 6}\right)
$$
for $D<x^{3 / 4}$ and
$$
\Sigma_{\delta}(x, D, 2 D) \ll_{\varepsilon} x^{435/923+\varepsilon}
$$
for $x^{482/923} \leqslant D<x^{488/923}$. In fact, by carefully checking the proof of {\cite[Proposition 4.1]{LWY2022}} and {\cite[(3.6)]{ZhangWei}}, we still have
\begin{equation}
\Sigma_{\delta}(x, D, t) \ll_{\varepsilon} x^{\varepsilon}\left(x^{1 / 6} D^{7 / 12}+D^{5 / 6}+x^{1 / 3} D^{2 / 9}+x^{1 / 2} D^{-1 / 6}\right)
\end{equation}
for $D<x^{3 / 4}$ and
\begin{equation}
\Sigma_{\delta}(x, D, t) \ll_{\varepsilon} x^{435/923+\varepsilon}
\end{equation}
for $x^{482/923} \leqslant D<x^{488/923}$. The lemma then follows from direct discussions.
\end{proof}

For $D \leq x$ and $\delta \notin-\mathbb{N}$, let
$$
\mathscr{S}_{\delta}(x, D)=\sum_{D<p \leqslant 2 D} \rho\left(\frac{x}{p+\delta}\right).
$$

\begin{lemma}\label{sigma2} Let $\delta \notin-\mathbb{N}$ be a fixed constant. For $D<x^{2 / 3}$, we have

$$
\mathscr{S}_{\delta}(x, D) \ll_{\varepsilon} \left\{\begin{array}{ll}x^{1 / 2+\varepsilon} D^{-1 / 6}+D^{1 / 2} & \text { if }\ D<x^{3 / 7}, \\ x^{1 / 3+\varepsilon} D^{2 / 9}+D^{1 / 2} & \text { if }\ x^{3 / 7} \leqslant D<x^{6 / 13}, \\ x^{1 / 6+\varepsilon} D^{7 / 12}+D^{1 / 2} & \text { if }\ x^{6 / 13} \leqslant D<x^{482/923}, \\ x^{435/923+\varepsilon}+D^{1/2} & \text { if }\ x^{482/923} \leqslant D<x^{488/923}.\end{array} \right.
$$
\end{lemma}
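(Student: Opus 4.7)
The plan is to deduce Lemma~\ref{sigma2} from Lemma~\ref{sigma_delta} by a standard partial summation that converts the $\Lambda$-weighted sum $\Sigma_\delta(x,D,t)$ into a sum over primes only, with a separate (and small) accounting for the contribution of proper prime powers.

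First, I would split off the higher prime powers. Grouping $\Lambda(n)$ according to whether $n$ is a prime or a proper prime power, we get, for any $t \in (D, 2D]$,
$$\Sigma_\delta(x, D, t) = \sum_{D < p \leqslant t} (\log p)\,\rho\!\left(\frac{x}{p+\delta}\right) + R(t),$$
where $R(t) = \sum_{\substack{D < p^{k} \leqslant t \\ k \geqslant 2}} (\log p)\,\rho\bigl(x/(p^{k}+\delta)\bigr)$. Since $|\rho| \leqslant 1/2$, since every proper prime power in $(D, 2D]$ has its base $p$ at most $(2D)^{1/2}$, and since $\log p \ll \log D$, we have $R(t) \ll D^{1/2} \log D$ uniformly in $t$.

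Next, writing $A(t) := \sum_{D < p \leqslant t}(\log p)\,\rho(x/(p+\delta)) = \Sigma_\delta(x, D, t) - R(t)$ and applying Abel summation with weight $1/\log p$, I would obtain
$$\mathscr{S}_\delta(x, D) \;=\; \frac{A(2D)}{\log(2D)} + \int_D^{2D} \frac{A(t)}{t(\log t)^{2}}\,dt \;\ll\; \frac{1}{\log D}\Bigl(\sup_{D<t\leqslant 2D} |\Sigma_\delta(x, D, t)| \,+\, D^{1/2}\log D\Bigr).$$
Since Lemma~\ref{sigma_delta} provides bounds on $\Sigma_\delta(x,D,t)$ that are \emph{uniform} in $t \in (D, 2D]$, I would then apply it case by case in each of the four ranges of $D$, absorb the $1/\log D$ factor into the $x^{\varepsilon}$ (or into the implied constant), and arrive at precisely the four estimates claimed for $\mathscr{S}_\delta(x, D)$. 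The residual term $D^{1/2}$ in the statement is exactly the contribution $R(t)/\log D$ coming from the proper prime powers.

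I do not foresee any genuine obstacle here; this is a routine transfer from a von~Mangoldt-weighted estimate to a prime-counting estimate. The points deserving the most care are simply (i) verifying that Lemma~\ref{sigma_delta}'s bound is uniform in $t \in (D, 2D]$ so that the $\sup$ step loses nothing, and (ii) checking that in each of the four $D$-ranges the dominant term of Lemma~\ref{sigma_delta} is at least $D^{1/2}$ up to a factor $x^{\varepsilon}$, so that the prime-power remainder $D^{1/2}$ is either already absorbed or appears additively as stated.
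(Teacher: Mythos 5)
Your proposal is correct and follows essentially the same route as the paper: split off the proper prime powers (contributing $O(D^{1/2})$, up to a harmless logarithm), apply Abel summation against $1/\log p$ to pass from the $\Lambda$-weighted sum $\Sigma_\delta(x,D,t)$ to $\mathscr{S}_\delta(x,D)$, and then invoke Lemma~\ref{sigma_delta} uniformly in $t\in(D,2D]$ in each range of $D$. The two points of care you flag (uniformity in $t$ and the additive $D^{1/2}$ term) are exactly the ones the paper's argument relies on.
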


\begin{proof}
For $0<D \leq x, D<t \leq 2 D$ and $\delta \notin-\mathbb{N}$, let
$$
\mathscr{G}_{\delta}(x, D, t)=\sum_{D<p \leqslant t} \vartheta(d) \rho\left(\frac{x}{p+\delta}\right)
$$
where
$$
\vartheta(n)= \left\{\begin{array}{ll}\log p\ & \text{if}\ n=p \text{ is\ a\ prime},  \\ 
0\ & \text { otherwise}.\end{array}
\right.
$$
Note that
\begin{equation}
\sum_{D<d \leqslant t} \Lambda(d) \rho\left(\frac{x}{d+\delta}\right)=\sum_{D<d \leqslant t} \vartheta(d) \rho\left(\frac{x}{d+\delta}\right)+O\left(t^{1 / 2}\right).
\end{equation}
so we have
\begin{equation}
\mathscr{G}_{\delta}(x, D, t)=\Sigma_{\delta}(x, D, t)+O\left(D^{1 / 2}\right)
\end{equation}
for any $D<t \leq 2 D$. Integrating by parts, we have
\begin{equation}
\mathscr{S}_{\delta}(x, D)=\frac{\mathscr{G}_{\delta}(x, D, 2 D)}{\log 2 D}+\int_{D}^{2 D} \frac{\mathscr{G}_{\delta}(x, D, t)}{t(\log t)^{2}} d t .
\end{equation}
Then our lemma follows from Lemma~\ref{sigma_delta} by routine computations.
\end{proof}

\begin{lemma}\label{3}{\cite[Lemma 4]{ZF}}. Let $\theta$ be a positive number with $0<\theta<1$ and $L \geqslant 1$ be a given integer. Then

$$
x \sum_{p \geqslant x^{1-\theta}} \frac{1}{p(p+1)}=\sum_{k=1}^{L} \frac{(-1)^{k-1}(k-1) !}{(1-\theta)^{k}} \frac{x^{\theta}}{(\log x)^{k}}+O\left(\frac{x^{\theta}}{(\log x)^{L+1}}\right),
$$

where the implied constant depends only on $L$ and $\theta$.
\end{lemma}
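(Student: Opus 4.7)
The plan is to reduce the prime sum to an explicit integral via Abel summation together with the prime number theorem, and then expand that integral asymptotically by iterated integration by parts.

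Set $y := x^{1-\theta}$, so that $\log y = (1-\theta)\log x$ and $x/y = x^{\theta}$. By Abel (Riemann--Stieltjes) summation,
$$
\sum_{p \geq y} \frac{1}{p(p+1)} = \int_{y^{-}}^{\infty} \frac{d\pi(t)}{t(t+1)}.
$$
I would then insert $\pi(t) = \on{li}(t) + E(t)$ with $E(t) \ll t/(\log t)^{L+2}$, which follows from the classical de la Vall\'ee Poussin estimate $E(t) \ll t\exp(-c\sqrt{\log t})$. Since $d\on{li}(t) = dt/\log t$, this decomposes the sum into a main piece
$$
\int_y^{\infty} \frac{dt}{t(t+1)\log t}
$$
and an error piece $\int_{y^{-}}^{\infty} dE(t)/(t(t+1))$; one integration by parts on the latter, combined with the bound on $E(t)$, shows that it contributes only $O(1/(y(\log y)^{L+2}))$. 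Expanding $\frac{1}{t(t+1)} = \frac{1}{t^{2}} + O(1/t^{3})$ further allows me to replace the main piece by $I := \int_y^{\infty} dt/(t^{2} \log t)$ at the cost of an $O(1/(y^{2}\log y))$ error.

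The core of the argument is the asymptotic expansion of $I$. Writing $I_k := \int_y^{\infty} dt/(t^{2} (\log t)^{k})$ and differentiating $-1/(t(\log t)^{k})$ yields the recursion
$$
I_k = \frac{1}{y(\log y)^{k}} - k\, I_{k+1}.
$$
Iterating this identity $L$ times and using the trivial bound $I_{L+1} \ll 1/(y(\log y)^{L+1})$, I obtain
$$
I = \sum_{k=1}^{L} \frac{(-1)^{k-1}(k-1)!}{y(\log y)^{k}} + O\left(\frac{1}{y(\log y)^{L+1}}\right).
$$

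Multiplying through by $x$ and substituting $y = x^{1-\theta}$ converts $x/y$ into $x^{\theta}$ and each $(\log y)^{k}$ into $(1-\theta)^{k}(\log x)^{k}$, producing exactly the stated main term along with an $O(x^{\theta}/(\log x)^{L+1})$ remainder. The only genuine obstacle is quantitative bookkeeping: one must invoke the prime number theorem with sufficiently many terms in $\pi(t)$ so that every auxiliary error is majorised by $x^{\theta}/(\log x)^{L+1}$, which is why the parameter $L$ in the hypothesis propagates into the requirement $E(t) \ll t/(\log t)^{L+2}$ above. All remaining manipulations are routine.
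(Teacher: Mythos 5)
Your proposal is correct: the Abel summation against $\pi(t)$, the prime number theorem in the form $\pi(t)=\on{li}(t)+O_L\!\left(t(\log t)^{-(L+2)}\right)$, and the recursion $I_k=\frac{1}{y(\log y)^{k}}-k\,I_{k+1}$ together with the trivial bound on $I_{L+1}$ all check out, and the bookkeeping (the $O(x^{2\theta-1})$ and $O(x^\theta(\log x)^{-(L+2)})$ errors being absorbed) is sound. Note that the paper itself gives no proof of this lemma --- it is quoted verbatim from \cite[Lemma 4]{ZF} --- so there is nothing internal to compare against; your argument is the standard one and is essentially what the cited source does.
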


\begin{lemma}\label{4}{\cite[Proposition 3.1]{C2019}}. Let $f$ be a positive-valued function on $\mathbb{N}$ and $D$ a parameter with $D \leq x$. Then,

$$
\sum_{D<n \leqslant x} f\left(\left[\frac{x}{n}\right]\right)=\sum_{d \leqslant x / D} f(d) \sum_{x /(d+1)<n \leqslant x / d} 1+O\left(f\left(\frac{x}{D}\right)\left(1+\frac{D^{2}}{x}\right)\right).
$$
\end{lemma}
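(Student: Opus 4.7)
The plan is to rewrite the sum on the left by grouping the summation variable $n$ according to the value $d=\lfloor x/n\rfloor$. For any integer $d\geq 1$, the equation $\lfloor x/n\rfloor=d$ is equivalent to $x/(d+1)<n\leq x/d$. When $D<n\leq x$, the value $d=\lfloor x/n\rfloor$ ranges over $1\leq d\leq \lfloor x/D\rfloor$ (since $n>D$ forces $x/n<x/D$ and $n\leq x$ forces $x/n\geq 1$). Hence
$$
\sum_{D<n\leq x} f\!\left(\left[\tfrac{x}{n}\right]\right)
=\sum_{d=1}^{\lfloor x/D\rfloor} f(d)\,\#\bigl\{n:\ \max(D,x/(d+1))<n\leq x/d\bigr\}.
$$

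Next I would compare this to the ``clean'' sum $\sum_{d\leq x/D}f(d)\sum_{x/(d+1)<n\leq x/d}1$, in which $n$ is not restricted to the range $(D,x]$. For those $d$ satisfying $x/(d+1)\geq D$, equivalently $d+1\leq x/D$, the two sums agree term by term and contribute the main term. The discrepancy is therefore concentrated at the single critical value $d^{*}=\lfloor x/D\rfloor$, where the interval $(x/(d^{*}+1),x/d^{*}]$ may drop below $D$ and include extra integers $n\leq D$.

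It remains to bound this boundary contribution. The number of excess integers $n$ is at most $D-x/(d^{*}+1)+O(1)$. Since $d^{*}\leq x/D$, one has $d^{*}+1\leq x/D+1$, whence $x/(d^{*}+1)\geq xD/(x+D)$ and
$$
D-\frac{x}{d^{*}+1}\ \leq\ D-\frac{xD}{x+D}\ =\ \frac{D^{2}}{x+D}\ \leq\ \frac{D^{2}}{x}.
$$
Because $f$ is positive-valued and each such $n$ is weighted by $f(d^{*})\leq f(x/D)$ (interpreting $f$ on the integer $d^{*}\leq x/D$), the total excess is $O\!\bigl(f(x/D)(1+D^{2}/x)\bigr)$, which is exactly the claimed error.

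The calculation is entirely elementary, so there is no genuine analytic obstacle; the only subtlety is the bookkeeping at the endpoint $d=\lfloor x/D\rfloor$. One must be careful that only this single value of $d$ can produce overcounting (the case $d+1\leq x/D$ gives automatic agreement, and $n>x$ never occurs because $n\leq x/d\leq x$ for $d\geq 1$), and that the simple inequality $D-x/(d^{*}+1)\leq D^{2}/x$ really does capture the order of the excess. Once this is verified the lemma follows directly.
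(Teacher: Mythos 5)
Your argument is correct. The paper offers no proof of this lemma at all---it is quoted verbatim from Proposition 3.1 of the cited reference---and your elementary regrouping by $d=\lfloor x/n\rfloor$, together with the observation that the unrestricted sum $\sum_{d\leq x/D}f(d)\sum_{x/(d+1)<n\leq x/d}1$ and the restricted sum over $D<n\leq x$ agree term by term except possibly at the single value $d^{*}=\lfloor x/D\rfloor$ (where the overcount is at most $D-x/(d^{*}+1)+O(1)\leq D^{2}/x+O(1)$ integers, each weighted by $f(d^{*})$), is exactly the standard argument behind such statements. The one step to phrase more carefully is $f(d^{*})\leq f(x/D)$: since $f$ is only assumed positive-valued on $\mathbb{N}$ and not monotone, the quantity $f(x/D)$ for non-integer $x/D$ must be read as $f(\lfloor x/D\rfloor)=f(d^{*})$, so this is an equality of interpretation rather than an inequality requiring proof.
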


\begin{lemma}\label{5}{\cite[Lemma 6]{ZF}}. Let $\theta$ be a positive number with $0<\theta<1$ and $A$ is any given positive number, we have

$$
x \sum_{d \geqslant x^{1-\theta}} \frac{\Lambda(d)}{d(d+1)}=x^{\theta}+O\left(x^{\theta}(\log x)^{-A}\right),
$$

where the implied constant depends only on $A$ and $\theta$.
\end{lemma}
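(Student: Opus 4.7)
The plan is to reduce the sum to the Prime Number Theorem with its classical error term via a single partial summation, after first peeling off the "easy" part of the denominator. Set $y := x^{1-\theta}$ and use the elementary identity
$$
\frac{1}{d(d+1)} \;=\; \frac{1}{d^{2}} \;-\; \frac{1}{d^{2}(d+1)}
$$
to split
$$
x\sum_{d \geqslant y}\frac{\Lambda(d)}{d(d+1)}
\;=\; x\sum_{d \geqslant y}\frac{\Lambda(d)}{d^{2}}
\;-\; x\sum_{d \geqslant y}\frac{\Lambda(d)}{d^{2}(d+1)}.
$$
A crude Chebyshev bound gives $\sum_{d \geqslant y}\Lambda(d)/(d^{2}(d+1)) \ll (\log y)/y^{2}$, so the second piece contributes only $O(x^{2\theta-1}\log x)$, which since $\theta<1$ is already absorbed in $O(x^{\theta}(\log x)^{-A})$ for every fixed $A\geqslant 1$.

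For the main piece, write $\psi(t) = \sum_{d \leqslant t}\Lambda(d)$. Partial summation (integration by parts against $d\psi$) and letting the upper endpoint tend to infinity yield
$$
\sum_{d \geqslant y}\frac{\Lambda(d)}{d^{2}}
\;=\; -\,\frac{\psi(y^{-})}{y^{2}} \;+\; 2\int_{y}^{\infty}\frac{\psi(t)}{t^{3}}\,dt .
$$
Now I invoke the Prime Number Theorem in the de la Vallée Poussin form
$\psi(t) = t + O\bigl(t\exp(-c\sqrt{\log t})\bigr)$. The leading terms give $-1/y + 2/y = 1/y$, and both error terms are bounded by $O\bigl(\exp(-c\sqrt{\log y})/y\bigr)$, so
$$
\sum_{d \geqslant y}\frac{\Lambda(d)}{d^{2}}
\;=\; \frac{1}{y} \;+\; O\!\left(\frac{\exp(-c\sqrt{\log y})}{y}\right).
$$

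Multiplying by $x$ and using $y = x^{1-\theta}$, so that $\log y = (1-\theta)\log x$, converts this to $x^{\theta} + O\bigl(x^{\theta}\exp(-c\sqrt{(1-\theta)\log x})\bigr)$, which is $x^{\theta} + O(x^{\theta}(\log x)^{-A})$ for every fixed $A>0$. Combining with the negligible contribution from Step~1 gives the lemma.

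I do not anticipate a genuine obstacle: the classical PNT error term saves more than any fixed power of $\log x$, and the algebraic splitting of $1/(d(d+1))$ is elementary. The only delicate point is that $\theta<1$ strictly, since this is what guarantees $y \to \infty$ at a polynomial rate and hence that the exponential savings $\exp(-c\sqrt{\log y})$ actually dominate every $(\log x)^{-A}$; if $\theta=1$ the argument would collapse, which is consistent with the known discontinuity of the leading term at $\theta=1$ noted in the introduction.
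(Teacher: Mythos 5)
Your argument is correct. The paper itself gives no proof of this lemma---it is quoted verbatim from Zhou and Feng \cite[Lemma 6]{ZF}---but your route (splitting off the harmless $1/(d^{2}(d+1))$ tail, then partial summation against $\psi(t)=t+O(t\exp(-c\sqrt{\log t}))$, with the leading terms $-1/y+2/y=1/y$ and the observation that $\exp(-c\sqrt{(1-\theta)\log x})$ beats every power of $\log x$ precisely because $\theta<1$) is the standard one and is essentially what the cited source does. All the estimates check: the second piece contributes $O(x^{2\theta-1}\log x)\ll x^{\theta}(\log x)^{-A}$, and the error integral $\int_{y}^{\infty}t^{-2}\exp(-c\sqrt{\log t})\,dt\ll y^{-1}\exp(-c\sqrt{\log y})$ since the exponential factor is decreasing.
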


\section{Proof of Theorem 1.1}

\noindent \textbf{Proof of Theorem 1.1.} For $\theta>\frac{435}{923}$, we spilt the sum $\pi_{\theta}(x)$ into the following two shorter sums
\begin{equation}
\pi_{\theta}(x)=S_{1}+S_{2}
\end{equation}
where
$$
S_{1}=\sum_{\substack{n \leqslant x^{435/923}, \left[ x / n\right] \in \mathbb{P}}} 1 \quad and \quad S_{2}=\sum_{\substack{x^{435/923}<n \leqslant x^{\theta},  \left[ x / n\right] \in \mathbb{P}}} 1.
$$
Trivial estimate leads to the bound
\begin{equation}
S_{1} \leqslant \sum_{n \leqslant x^{435/923}} 1 \leqslant x^{435/923}.
\end{equation}
By Lemma~\ref{4}, we can rewrite $S_{2}$ as
\begin{align}
\nonumber S_{2} &=\sum_{x^{1-\theta} \leqslant p \leqslant x^{488/923}} \sum_{x /(p+1)<n \leqslant x / p} 1+O\left(x^{2 \theta-1}\right)\\
\nonumber &=\sum_{x^{1-\theta} \leqslant p \leqslant x^{488/923}}\left(\frac{x}{p}-\rho\left(\frac{x}{p}\right)-\frac{x}{p+1}+\rho\left(\frac{x}{p+1}\right)\right)+O\left(x^{2 \theta-1}\right) \\
&=x \sum_{p \geqslant x^{1-\theta}} \frac{1}{p(p+1)}-x \sum_{p>x^{488/923}} \frac{1}{p(p+1)}+R_{1}(x)-R_{0}(x)+O\left(x^{2 \theta-1}\right),
\end{align}
where
$$
R_{\delta}(x)=\sum_{x^{1-\theta} \leqslant p \leqslant x^{488/923}} \rho\left(\frac{x}{p+\delta}\right) \quad(\delta=0\text { or } 1) .
$$
It is easy to see that $x^{2 \theta-1} \ll x^{\theta-\varepsilon}$ for any $\theta<1$ and it is clear that
\begin{equation}
x \sum_{p>x^{488/923}} \frac{1}{p(p+1)} \leqslant x \sum_{n \geqslant x^{488/923}} \frac{1}{n(n+1)} \ll x^{435/923}.
\end{equation}
From Lemma~\ref{3}, we have
\begin{equation}
x \sum_{p \geqslant x^{1-\theta}} \frac{1}{p(p+1)}=\sum_{k=1}^{L} \frac{(-1)^{k-1}(k-1) !}{(1-\theta)^{k}} \frac{x^{\theta}}{(\log x)^{k}}+O\left(\frac{x^{\theta}}{(\log x)^{L+1}}\right) .
\end{equation}
To complete the proof of our theorem, it remains to show that
\begin{equation}
R_{\delta}(x) \ll x^{\theta}(\log x)^{-(L+1)}
\end{equation}
for $\delta=0$ and 1. For any positive integer $i$, let $D_{i}=x^{488/923} 2^{-i}$. Since $\theta>435/923$, then $D_{i} \leqslant x^{488/923}<x^{2 / 3}$ for all $1 \leqslant i \leqslant\left[\frac{\theta-435/923}{\log 2} \log x\right]+1$. By Lemma~\ref{sigma2},
$$
\left|R_{\delta}(x)\right| \leqslant \sum_{1 \leqslant i \leqslant\left[\frac{\theta-435/923}{\log 2} \log x\right]+1} \mathscr{S}_{\delta}\left(x, D_{i}\right)
$$
$$
\ll_{\varepsilon} \sum_{1 \leqslant i \leqslant\left[\frac{\theta-435/923}{\log 2} \log x\right]+1}\left(x^{1 / 2+\varepsilon} D_{i}^{-1 / 6}+x^{1 / 3+\varepsilon} D_{i}^{2 / 9}+x^{1 / 6+\varepsilon} D_{i}^{7 / 12}+x^{435/923+\varepsilon}+D_{i}^{1/2}\right)
$$
\begin{equation}
\ll_{\varepsilon} x^{(\theta+2) / 6}+x^{435/923+\varepsilon} \ll_{\varepsilon, \theta} x^{435/923+\varepsilon} \ll_{\varepsilon, \theta} x^{\theta}(\log x)^{-(L+1)},
\end{equation}
valid for $\frac{435}{923}<\theta<\frac{764}{923}$. Combined with the range $\frac{17}{36}<\theta<1$ of Li \cite{LRB}, we get the theorem.\\

The proof of Theorem~\ref{2} is similar to the proof of Theorem 1 by replacing Lemma~\ref{3} with Lemma~\ref{5}.

\section{Proof of Theorem 1.3}

\noindent \textbf{Proof of Theorem 1.3.} Let $f=\mathbb{1}_{\mathbb{P}}$ or $\mathbb{1}_{\mathbb{P}_{\text {w}}}$. First we write

\begin{equation}
S_{f}(x)=\sum_{n \leqslant x} f\left(\left[\frac{x}{n}\right]\right)=S_{f1}(x)+S_{f2}(x)
\end{equation}

with

$$
S_{f1}(x):=\sum_{n \leqslant x^{435/923}} f\left(\left[\frac{x}{n}\right]\right), \quad S_{f2}(x):=\sum_{x^{435/923}<n \leqslant x} f\left(\left[\frac{x}{n}\right]\right) .
$$
Trivial estimate leads to the bound
\begin{equation}
S_{f1}(x) \ll x^{\frac{435}{923}}.
\end{equation}
By Lemma~\ref{4}, we can rewrite $S_{f2}$ as
\begin{align}
\nonumber S_{f2}(x) & =\sum_{d \leqslant x^{488/923}} f(d) \sum_{x /(d+1)<n \leqslant x / d} 1 \\
\nonumber & =\sum_{d \leqslant x^{488/923}} f(d)\left(\frac{x}{d}-\rho\left(\frac{x}{d}\right)-\frac{x}{d+1}+\rho\left(\frac{x}{d+1}\right)\right) \\
& =x \sum_{d \geqslant 1} \frac{f(d)}{d(d+1)}+R_{1}^{f}(x)-R_{0}^{f}(x)+O(x^{435/923}),
\end{align}
where we have used the following bounds
$$
x \sum_{d>x^{488/923}} \frac{f(d)}{d(d+1)} \ll x^{435/923}, \quad \sum_{d \leqslant x^{435/923}} f(d)\left(\rho\left(\frac{x}{d+1}\right)-\rho\left(\frac{x}{d}\right)\right) \ll x^{435/923}
$$
and
$$
R_{\delta}^{f}(x)=\sum_{x^{435/923}<d \leqslant x^{488/923}} f(d) \rho\left(\frac{x}{d+\delta}\right).
$$
Combining (25), (26) and (27), it follows that
$$
S_{f}(x)=x \sum_{d \geqslant 1} \frac{f(d)}{d(d+1)}+O_{\varepsilon}\left(\left|R_{1}^{f}(x) \right|+\left|R_{0}^{f}(x)\right|+x^{435/923}\right) .
$$
On the other hand, we have
$$
R_{\delta}^{\mathbb{1}_{\mathbb{P}_{\text {w}}}}(x)=\sum_{x^{435/923}<p^{\nu} \leqslant x^{488/923}} \rho\left(\frac{x}{p^{\nu}+\delta}\right)=R_{\delta}^{\mathbb{1}_{\mathbb{P}}}(x)+O\left(x^{244/ 923}\right) .
$$
Thus in order to prove Theorem~\ref{t3} , it remains to show that
\begin{equation}
R_{\delta}^{\mathbb{1}_{\mathbb{P}}}(x) \ll_{\varepsilon} x^{435/923+\varepsilon} \quad(x \geqslant 1)
\end{equation}
for $\delta=0$ and 1. By using Lemma~\ref{sigma2},
\begin{align}
\nonumber R_{\delta}^{\mathbb{1}_{\mathbb{P}}}(x)& \ll_{\varepsilon} x^{\varepsilon} \max _{x^{435 / 923}<D \leqslant x^{488 / 923}} \mathscr{S}_{\delta}(x, D)\\
\nonumber & \ll_{\varepsilon}x^{\varepsilon}\max _{x^{435 / 923}<D \leqslant x^{482 / 923}} \mathscr{S}_{\delta}(x, D)+x^{\varepsilon}\max _{x^{482 / 923}<D \leqslant x^{488 / 923}} \mathscr{S}_{\delta}(x, D)\\
\nonumber & \ll_{\varepsilon} \max _{x^{435 / 923}<D \leqslant x^{482 / 923}} \left(x^{1 / 6+\varepsilon} D^{7 / 12}+D^{1/2}\right)+\max _{x^{482 / 923}<D \leqslant x^{488 / 923}} \left(x^{435/923+\varepsilon}+D^{1/2}\right)\\
\nonumber & \ll_{\varepsilon} x^{435/923+\varepsilon}.
\end{align}
This completes the proof of Theorem~\ref{t3}.

\bibliographystyle{plain}
\bibliography{bib}
\end{document}